\documentclass[12pt]{amsart}
\usepackage{amsthm,amsfonts,amsmath,amssymb,amsaddr,geometry,eufrak,bigints,eucal}
\usepackage{graphicx}
 \usepackage{enumitem}
 \usepackage{hyperref}
 \usepackage{etoolbox}
 \usepackage{doi}
 \makeatletter
\patchcmd{\section}{\scshape}{\relax}{}{}
\patchcmd{\section}{\uppercase}{\relax}{}{}
\makeatother
\input xypic
\usepackage{url}
\newtheorem{theorem}{Theorem}[section]

\newtheorem{corollary}{Corollary}[section]

\newtheorem{definition}{Definition}[section]
\newtheorem{example}{Example}[section]

\newtheorem{remark}{Remark}[section]

\numberwithin{equation}{section}

\begin{document}

\title[]{Another Perspective on Chatterjea Contraction}

\author{Shallu Sharma$^1$, Irfan Ahmed$^2$ and Sahil Billawria$^{3,*}$}
\maketitle

\begin{center}
\footnotesize $^1$Department of Mathematics, University of Jammu, Jammu, India.\\
Email: shallujamwal09@gmail.com\\
$^2$Department of Mathematics, University of Jammu, Jammu, India.\\
Email: irfah.ahmed@jammuuniversity.ac.in\\
$^3$Department of Mathematics, University of Jammu, Jammu, India.\\
Email: sahilbillawria2@gmail.com \\
\end{center}
\begin{abstract}
Inspired by the well-known result stating that if any iterate of a mapping is a Banach contraction on a complete metric space, then the mapping itself possesses a unique fixed point, we investigate that claim for a Chatterjea contraction but by retaining the left-hand side of the inequality as per the mapping itself. With an additional assumption of $k$- continuity, the existence and uniqueness of a fixed point is obtained for a new class of contractions, $m$-Chatterjea contraction, on a complete metric space. Several examples are given in order to substantiate many theoretical claims such as discontinuity at the unique limit point of the iterative sequence, as well as examples demonstrating that this new class strictly contains the class of Chatterjea mappings.\\
\\
\noindent  Mathematical Subject Classification: 47H10, 54H25.\\
\\
\noindent Key words and phrases: Banach contraction, Fixed point, Chatterjea contraction, $k$-continuity, $m$-Chatterjea contraction.
\end{abstract}
\section{Introduction and Preliminaries}
The Banach fixed point theorem \cite{Banach} stands as one of the most fundamental result in metric fixed point theory, with numerous generalizations and corollaries. Among its well-known consequences is the fact that if some iterate of a mapping is a Banach contraction on a complete metric space, then the mapping itself possesses a unique fixed point. This paper investigates an analogous claim for the Chatterjea contractive condition which was introduced by S. K. Chatterjea \cite{Chatterjea} in 1972 and provides conditions under which a mapping possesses a unique fixed point.
\begin{theorem}
 Let $(X,\rho)$ be a complete metric space and $\psi:X \to X$ is a self-mapping with $\alpha \in \left[0, \frac{1}{2}\right)$ such that for all $x, y\in X$, the following holds:
\begin{equation}
\rho(\psi x, \psi y) \le \alpha \left( \rho(x, \psi y) + \rho(y, \psi x) \right).\tag{1.1}\label{1.1}
\end{equation}
Then the mapping $\psi$ has a unique fixed point in $X$.
\end{theorem}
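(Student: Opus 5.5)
We prove the statement by Picard iteration, following the standard argument. Fix an arbitrary $x_0\in X$, set $x_{n+1}=\psi x_n$, and write $d_n=\rho(x_n,x_{n+1})$. If $x_n=x_{n+1}$ for some $n$, then $x_n$ is a fixed point and only uniqueness remains, so we may assume consecutive iterates are distinct.

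The first step is a geometric decay estimate for $d_n$. Apply \eqref{1.1} with $x=x_{n-1}$ and $y=x_n$. Since $\rho(x_n,\psi x_{n-1})=\rho(x_n,x_n)=0$, this gives
\[
d_n=\rho(\psi x_{n-1},\psi x_n)\le \alpha\,\rho(x_{n-1},x_{n+1})\le \alpha\,(d_{n-1}+d_n).
\]
Rearranging yields $d_n\le \lambda d_{n-1}$ with $\lambda=\frac{\alpha}{1-\alpha}$. The hypothesis $\alpha<\tfrac12$ is exactly what makes $\lambda<1$, and this is the only point where it is needed. Iterating gives $d_n\le \lambda^n d_0$.

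Next we show the sequence is Cauchy. For $m>n$ the triangle inequality and the geometric series give
\[
\rho(x_n,x_m)\le \sum_{j=n}^{m-1}\lambda^j d_0\le \frac{\lambda^n}{1-\lambda}\,d_0\to 0 .
\]
By completeness, $x_n\to z$ for some $z\in X$.

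The main obstacle is showing $\psi z=z$, because $\psi$ is not assumed continuous. We handle it by applying \eqref{1.1} once more, with $x=x_n$ and $y=z$:
\[
\rho(x_{n+1},\psi z)\le \alpha\bigl(\rho(x_n,\psi z)+\rho(z,x_{n+1})\bigr).
\]
Letting $n\to\infty$ and using continuity of the metric gives $\rho(z,\psi z)\le \alpha\,\rho(z,\psi z)$. Since $\alpha<1$, it follows that $\rho(z,\psi z)=0$.

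Finally, for uniqueness, suppose $\psi z=z$ and $\psi w=w$. Then \eqref{1.1} gives $\rho(z,w)\le 2\alpha\,\rho(z,w)$. Because $2\alpha<1$, this forces $\rho(z,w)=0$, so $z=w$.
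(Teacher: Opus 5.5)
Your proof is correct and complete; it is the standard proof of Chatterjea's theorem. The paper does not prove this statement. It quotes it as Chatterjea's 1972 result and uses it only as background, so there is no in-paper argument to match.

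The useful comparison is with the paper's proof of Theorem~2.1, where $\psi^2$ replaces $\psi$ on the left-hand side:

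\begin{itemize}
\item \textbf{Decay estimate.} With $\psi^2$ on the left, the paper only gets the two-term recurrence $\rho_n\le\alpha(\rho_{n-2}+\rho_{n-1})$, and it controls this through the roots of $r^2-\alpha r-\alpha=0$. In your setting, $d_n$ reappears on the right and can be absorbed, which gives the one-step ratio $\lambda=\alpha/(1-\alpha)$ immediately.
\item \textbf{Fixed-point step.} You apply \eqref{1.1} to the pair $(x_n,z)$ and let $n\to\infty$. This works precisely because the left-hand side is $\rho(\psi x_n,\psi z)$. Under the $\psi^2$-condition the same move only produces $\rho(z,\psi^2 z)\le\alpha\,\rho(z,\psi z)$, which does not close the argument. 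That is why the paper imposes $k$-continuity there, and Example~\ref{C} shows that for $\psi^3$ a fixed point can genuinely fail to exist without such a hypothesis.
\end{itemize}

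One small wording slip: $\alpha<\tfrac12$ is not used \emph{only} to get $\lambda<1$. Your uniqueness step relies on the same condition $2\alpha<1$, while the fixed-point step needs merely $\alpha<1$.
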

The Chatterjea fixed point theorem occupies a distinguished position in the literature due to the lack of continuity aassumption and through the carachterization of completeness of an underlying metric space. Moreover, Kannan \cite{Kannan} and Chatterjea \cite{Chatterjea} independently provided fixed point theorems that characterize completeness of metric spaces, a property not shared by Banach's theorem (as demonstrated by Connell \cite{Connell}) and hence refuted the claim that a metric space $(X,\rho)$ is complete if and only if any Banach contraction on $X$ possesses a fixed point. But in 1975, this claim was proven valid for the class of Chatterjea contractions by Subrahmanyam \cite{Sub}. Moreover, the uniqueness of the fixed point  is the only thing that unites Chatterjea mappings, Kannan mappings, and the Banach Contraction Principle, otherwise they are independent of each other.

Within the field of fixed point theory, the class of Chatterjea mappings has been extensively studied and modified in numerous ways that can be distinguished from one another. The contractive property of the mapping is loosened in the first instance (See \cite{CA, CB, CC, CD, CH}). The second instance involves relaxing the topology as examined in \cite{CE}. And the third instance concerns theorems developed for multivalued mappings of the Chatterjea type (See \cite{CF, CG}). Finally, the fourth case examines nontrivial extensions formulated in a weaker or more generalized metric framework, thereby providing a broad and systematic investigation of the various directions in which this theory can be extended (See\cite{CI, CJ, CK}).

In this paper, we present a modified perspective. While it is obvious that if the $n$-th iterate of a mapping satisfies the Chatterjea condition then, by the same approach as for Banach contractions, the mapping has a unique fixed point on a complete metric space, we instead retain the right-hand side of the inequality (\ref{1.1}) in its original form and consider the $m$-th iterate on the left-hand side. This leads to the notion of an $m$-Chatterjea mappings fulfilling the condition
\[
\rho(\psi^m x, \psi^m y) \le \alpha \left( \rho(x, \psi y) + \rho(y, \psi x) \right)
\] on $X$ for some $\alpha\in\left[0,\frac{1}{2}\right)$ and $m\in \mathbb{N}$. For $m=1$, this reduces to Chatterjea contraction, so this problwm will not be in the scope of this article. we will use different proof techniques  for $m=2$ and $m\geq 3$, but some continuity assumptions are enforced in order to obtain existence of the fixed point. This differentiated it from the Chatterjea contraction, but it also opens an interesting problem, does there always exists some $k\in\mathbb{N}$ such that a Chatterjea contraction is a $k$-continuous mapping. The continuity presumption is sufficient, but not necessary as can be seen through examples. Important property of this class of mappings for any natural m is that the iterative sequence con- verges and that the limit is uniquely determined for arbitrary initial point in a complete metric space. Additionally, there are examples of mappings satisfying above mentioned contractive condition and not possessing a fixed point in a complete metric space due to being discontinuous.
We begin by recalling the concept of $ k$-continuity, which plays a crucial role in our existence results.

\begin{definition}
If $(X, \rho)$ is a metric space and $\psi: X \to X$ a mapping, then $\psi$ is $k$-continuous for some $k \in \mathbb{N}$ if $\psi^k$ is continuous.
\end{definition}

Recall that continuity of a mapping on a metric space is equivalent to sequential continuity. Clearly, continuity implies $k$-continuity for any $k \in \mathbb{N}$, but the converse does not hold for $k \ge 2$.
\begin{example}
If $X = \mathbb{R}$ be equipped with the Euclidean metric $\rho$ and define $\psi: X \to X$ by 
\[
\psi x = 
\begin{cases} 
1, & x \in \mathbb{Q},\\ 
0, & x \notin \mathbb{Q}.
\end{cases}
\]
Then $\psi$ is discontinuous on $X$, but $\psi$ is a $2$-continuous mapping ($\because \psi^2$ is constant and hence continuous).
\end{example}
\section{Main Results}
We distinguish the analysis of iterates of a mapping for $m=2$ and for $m>2$ due to differences in the underlying proof strategies, which vary slightly from those used in the general case.
\begin{theorem}\label{2.1}
If $(X, \rho)$ is a complete metric space and $\psi : X \to X$ is a $k$-continuous mapping for some $k \in \mathbb{N}$ such that for all $x, y \in X$,
\begin{equation}
\rho(\psi^2 x, \psi^2 y) \leq \alpha \left( \rho(x, \psi y) + \rho(y, \psi x) \right) \tag{2.1}\label{2.1}
\end{equation}
holds for some $\alpha \in \left(0, \frac{1}{2}\right]$. Then $\psi$ has a unique fixed point in $X$ and for any arbitrary initial point $x \in X$ the iterative sequence $(\psi^n x)$ converges to this fixed point.
\end{theorem}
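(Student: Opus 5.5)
The plan is to follow the Picard iterates, show they form a Cauchy sequence using only (\ref{2.1}), obtain a limit $z$, use $k$-continuity to get that $z$ is a fixed point of $\psi^k$, and then upgrade this to $\psi z=z$ by a periodicity argument. Fix $x\in X$ and put $x_n=\psi^n x$ and $d_n=\rho(x_n,x_{n+1})$. Applying (\ref{2.1}) with the pair $(x_{n-1},x_n)$ gives
\[
d_{n+1}=\rho(\psi^2x_{n-1},\psi^2x_n)\le \alpha\bigl(\rho(x_{n-1},x_{n+1})+\rho(x_n,x_n)\bigr)\le \alpha(d_{n-1}+d_n).
\]
Set $M_n=\max\{d_n,d_{n+1}\}$. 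Then $d_{n+2}\le 2\alpha M_n$ and $d_{n+1}\le 2\alpha M_{n-1}$. From these one gets $M_{n+1}\le \max\{M_n,2\alpha M_n\}$, so $(M_n)$ is nonincreasing, and also $M_{n+2}\le 2\alpha M_n$. Hence $d_n\le C\lambda^{n}$ with $\lambda=\sqrt{2\alpha}$. Since $\sum d_n<\infty$, the sequence $(x_n)$ is Cauchy, and completeness gives a limit $z$.

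Next I use $k$-continuity. Since $\psi^k x_n=x_{n+k}\to z$ and $\psi^k$ is continuous, $\psi^k z=\lim \psi^k x_n=z$. The orbit of $z$ is therefore periodic: the sequence $e_n=\rho(\psi^nz,\psi^{n+1}z)$ satisfies $e_{n+k}=e_n$. Let $E=\max_n e_n$, which is a maximum over finitely many values. Running the same recursion for the orbit of $z$ gives $e_{n+2}\le 2\alpha\max\{e_n,e_{n+1}\}\le 2\alpha E$. Every $e_m$ appears as some $e_{n+2}$ by periodicity, so $E\le 2\alpha E$, which forces $E=0$. In particular $\psi z=z$. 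This argument avoids having to pass to the limit inside (\ref{2.1}), which is good because $\psi$ itself need not be continuous.

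For uniqueness, suppose $\psi z=z$ and $\psi w=w$. Then (\ref{2.1}) gives $\rho(z,w)\le 2\alpha\rho(z,w)$, so $z=w$. Since the starting point $x$ was arbitrary, every Picard sequence converges to this unique fixed point.

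The main obstacle is the endpoint $\alpha=\tfrac12$, which the statement allows. Every strict contraction factor above is $2\alpha$, and it equals $1$ at that endpoint. In that case the geometric decay, the conclusion $E=0$, and the uniqueness step all fail. I would first check whether the interval was meant to be $\bigl[0,\tfrac12\bigr)$, as in (\ref{1.1}), and write the proof for $\alpha<\tfrac12$. If $\alpha=\tfrac12$ really is intended, I would look for a counterexample, for instance a two-point space with $\psi$ swapping the points. For such a map $\psi^2$ is the identity, so (\ref{2.1}) reads $\rho(x,y)\le\tfrac12\bigl(\rho(x,\psi y)+\rho(y,\psi x)\bigr)$; for the two distinct points the right-hand side is $\tfrac12(0+0)=0$, so this example fails and a more refined one would be needed. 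Everything else in the argument is routine.
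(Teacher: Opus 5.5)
Your argument is correct for $0<\alpha<\tfrac12$. It has the same skeleton as the paper's proof: the recurrence $d_{n+1}\le\alpha(d_{n-1}+d_n)$, summability, a limit $z$, $\psi^kz=z$ from $k$-continuity, periodicity forcing $\psi z=z$, and uniqueness from $2\alpha<1$. Three sub-steps are done differently.
\begin{enumerate}
\item \emph{Decay.} The paper dominates $\rho_n$ by the explicit solution $Pr_1^n+Qr_2^n$ of $a_n=\alpha(a_{n-1}+a_{n-2})$, using its characteristic roots. Your two-step bound $M_{n+2}\le 2\alpha M_n$ gives geometric decay with ratio $\sqrt{2\alpha}$ and no constants to compute.
\item \emph{Getting $\psi^kz=z$.} The paper first claims that every orbit converges to the same $x^*$, so that it can use $\psi^nx^*\to x^*$. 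Its displayed justification, $\rho(\psi^nx_0,\psi^ny_0)\le\rho(\psi^nx_0,\psi^nx_0)+\rho(\psi^ny_0,\psi^ny_0)$ ``by (3.1)'', is garbled. It is fixable by letting $n\to\infty$ in (2.1) applied to $(x_n,y_n)$. You instead use $\psi^kx_n=x_{n+k}\to z$ directly and recover the common limit at the end from uniqueness, so you never need that step.
\item \emph{Getting $\psi z=z$.} The paper uses that consecutive distances along the $k$-periodic orbit of $x^*$ tend to $0$. Your inequality $E\le 2\alpha E$ is the same idea in a self-contained form.
\end{enumerate}
Overall your route is a bit shorter and avoids the one defective step in the paper's argument.

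You are also right about the endpoint, and it is the statement itself, not just the proof, that fails at $\alpha=\tfrac12$. The paper's own proof uses $r_1<1$ only for $\alpha\in(0,\tfrac12)$ and concludes with ``since $2\alpha<1$''. The corollary immediately after the theorem takes $\alpha\in[0,\tfrac12)$. So the intended range is $\alpha<\tfrac12$, and restricting to it is the right call.

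The test case you were looking for is the identity rather than the swap. On any metric space with at least two points, the identity satisfies (2.1) with equality for $\alpha=\tfrac12$ and is continuous, yet it has many fixed points. For failure of existence, take $\psi x=x+1$ on $\mathbb{R}$. With $t=x-y$, the right-hand side of (2.1) is $\tfrac12(|t-1|+|t+1|)=\max\{|t|,1\}\ge|t|$, so (2.1) holds, but $\psi$ has no fixed point.
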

\begin{proof}
Let $x_0 \in X$ be arbitrary and define $x_n = \psi^n x_0$ for any $n \in \mathbb{N}$.\\
If $\alpha = 0$, then $\psi^2$ is a constant mapping, so there exists a unique $x^* \in X$ such that $\psi^2 x = x^*$ for all $x \in X$. In particular, $\psi^2 x^* = x^*$. If $\psi x^* = y$, then $\psi^2 y = \psi(\psi^2 x^*) = \psi x^* = y$ which asserts that $y = x^*$. For any $x \in X$, $\psi^n x = x^*$ for all $n \geq 2$, so the sequence $(\psi^n x)$ converges to $x^*$.\\
 If $\alpha \neq 0$, denote with $\rho_n = \rho(x_n, x_{n+1})$ a distance. First, we establish a recurrence relation for $\rho_n$.\\

For $n \geq 2$, take $x = \psi^{n-2} x_0$ and $y = \psi^{n-1} x_0$ in (\ref{2.1}). Then
\begin{align*}
\rho(\psi^n x_0, \psi^{n+1} x_0) &= \rho(\psi^2(\psi^{n-2}x_0), \psi^2(\psi^{n-1}x_0)) \\
&\le \alpha (\rho(\psi^{n-2}x_0, \psi(\psi^{n-1}x_0)) + \rho(\psi^{n-1}x_0, \psi(\psi^{n-2}x_0))) \\
&= \alpha (\rho(x_{n-2}, x_n) + \rho(x_{n-1}, x_{n-1})) \\
&= \alpha \rho(x_{n-2}, x_n) \\
&\le \alpha (\rho(x_{n-2}, x_{n-1}) + \rho(x_{n-1}, x_n)) \\
&= \alpha (\rho_{n-2} + \rho_{n-1}).
\end{align*}
Thus,
\begin{equation}
 \rho_n \le \alpha (\rho_{n-2} + \rho_{n-1}). \tag{2.2}\label{2.2}
\end{equation}
Consider the recurrence relation $a_n = \alpha (a_{n-1} + a_{n-2})$ for $n \ge 2$ with initial conditions $a_0 = \rho_0$ and $a_1 = \rho_1$. Since all arguments are non-negative, $\rho_n \le a_n$ for all $n$. The characteristic equation is $r^2 - \alpha r - \alpha = 0$ with roots
$$r_1 = \frac{\alpha + \sqrt{\alpha^2 + 4\alpha}}{2}, \quad r_2 = \frac{\alpha - \sqrt{\alpha^2 + 4\alpha}}{2}$$
Note that $0 < r_1 < 1$ for $\alpha \in \left(0, \frac{1}{2}\right)$. Since $r_1 < 1$ is equivalent to $\sqrt{\alpha^2 + 4\alpha} < 2 - \alpha$, which holds because $\alpha < \frac{1}{2}$. Also $|r_2| < r_1$. Hence
$$a_n = P r_1^n + Q r_2^n,$$
where 
\begin{align*}
P &= - \frac{-2\rho_1 + \alpha\rho_0 - \sqrt{\alpha(4+\alpha)}\rho_0}{2\sqrt{\alpha(4+\alpha)}} \\
Q &= \rho_0 + \frac{-2\rho_1 + \alpha\rho_0 - \sqrt{\alpha(4+\alpha)}\rho_0}{2\sqrt{\alpha(4+\alpha)}}.
\end{align*}
and the series $\sum_{n=0}^{\infty} a_n$ converges. Consequently, $\sum_{n=0}^{\infty} \rho_n$ converges.
Hence $(x_n)$ is a Cauchy sequence. Since $X$ is complete, there exists $x^* \in X$ such that $\lim \limits_{n \to \infty}  x_n = x^*$.\\
Let $y \in X$ be arbitrary and define $y_n = \psi^n y_0$. 
Then by using (3.1) and triangle inequality, we have
\begin{equation*}
    \rho(\psi^n x_0, \psi^n y_0) \leq \rho(\psi^n x_0, \psi^n x_0) + \rho(\psi^n y_0, \psi^n y_0), 
\end{equation*}
which gives $\lim \limits_{n \to \infty}  \psi^n y_0 = x^*$ for all $y_0 \in X$.\\
In particular,
\begin{equation}
   \lim \limits_{n \to \infty}  \psi^n x^* = x^*. \tag{2.3}\label{2.3}
\end{equation}
Since $\psi$ is $k$-continuous mapping. Therefore,
\begin{equation*}
    \psi^k x^* = \psi^k \left( \lim_{n \to \infty} \psi^n x^* \right) = \lim \limits_{n \to \infty}  \psi^{k+n} x^* = x^*.
\end{equation*}
If $k=1$, then $\psi x^* = x^*$ and we are done. If $k \geq 2$, we have to show that $x^*$ is also a fixed point of $\psi$. For any $n \in \mathbb{N}$, it follows
\begin{align*}
    \rho(x^*, \psi x^*) &= \rho(\psi^{kn} x^*, \psi^{kn+1} x^*) \\
    &\leq \rho(\psi^{kn-2} x^*, \psi^{kn-1} x^*) + \rho(\psi^{kn-1} x^*, \psi^{kn} x^*)\to 0~\text{as}~ n \to \infty.
\end{align*}
Hence $\psi x^* = x^*$.

Further, suppose $y \in X$ such that $\psi y = y$, then
\begin{align*}
    \rho(x^*, y) &= \rho(\psi^2 x^*, \psi^2 y) \\
    &\leq \alpha \left( \rho(x^*, \psi y) + \rho(y, \psi x^*) \right) \\
    &= \alpha \left( \rho(x^*, y) + \rho(y, x^*) \right) \\
    &= 2\alpha \rho(x^*, y).
\end{align*}
Since $2\alpha < 1$, we have $\rho(x,y) = 0$.
\end{proof}
                                     
\begin{remark}
The mapping $\psi$ satisfying the condition (\ref{2.1}) has a unique fixed point if and only if $\psi^2$ has a unique fixed point. Clearly, the set of fixed points of a mapping $\psi$ is a subset
of the set of fixed point of a mapping $\psi^2$ due to
\begin{equation*}
            \psi^2 x^* = \psi (\psi x^*) = \psi x^* = x^*.
\end{equation*}
Conversely, if $z$ satisfies $\psi^2 z = z$, then
            $$\psi^n z = \begin{cases} z, & \text{if } n \text{ is even} \\ \psi z, & \text{if } n \text{ is odd} \end{cases}$$

for any $n \in \mathbb{N}$. Since $\psi^n z$ is 
convergent, we must have $\psi z = z$.
\end{remark}
\begin{example}\label{A}
Let $X = [0,1]$ equipped with the Euclidean metric and defined a mapping $\psi : X \to X$ by
$$\psi x = \frac{x}{2}, \quad x \in [0,1].$$
 If we take $x=0, y=1$.Then
\[ \rho(\psi x, \psi y) = \frac{1}{2} = \rho(x, \psi y) + \rho(y, \psi x). \]
So $\psi$ is not a Chatterjea contraction on $X$. Also it is not a Kannan contraction.\\
 Now, for any $x, y \in [0,1]$, we have
\[ \rho(\psi^2 x, \psi^2 y) = \frac{|x-y|}{4}. \]
Also,
\[ \rho(x, \psi y) + \rho(y, \psi x) = \left| x - \frac{y}{2} \right| + \left| y - \frac{x}{2} \right|. \]
Thus,
\[ \frac{|x-y|}{4} \le \frac{1}{6} \left( \left| x - \frac{y}{2} \right| + \left| y - \frac{x}{2} \right| \right).\]
Thus the inequality (\ref{2.1}) holds with $\alpha = \frac{1}{6} < \frac{1}{2}$.\\
 The same mapping fails to be $2$-Kannan contraction (contrary to the claim Example $2.3$, \cite{Cvetkovic}). Indeed, for $x=0,  y=1$
\[ \rho(\psi^2 x, \psi^2 y)=\frac{1}{4}~\text{and}~\rho(x, \psi x)+\rho(y, \psi y)=\frac{1}{2}.\]
So the $2$-Kannan inequality would require
\[ \frac{1}{4} \le \alpha \frac{1}{2} \implies \alpha \ge \frac{1}{2}. \]
\end{example}
\begin{example}\label{B}
Let $X = [0, 1]$ equipped with the Euclidean metric and define $\psi : X \to X$ by
\[ \psi x = \frac{1-x}{2}, \quad x \in [0, 1]. \]
Then $\psi$ is a Kannan contraction with constant $\alpha = \frac{1}{3}$. By Remark [2.4. ], $\psi$ is a $2$-Kannan contraction.\\

 we now show that $\psi$ is not a $2$-Chatterjea contraction. \\
If $x = 0, \ y = \frac{1}{2}$. Then
\[ \rho(\psi^2 x, \psi^2 y) =  \frac{1}{8} \]
and
\[ \rho(x, \psi y) = \frac{1}{4}, \ \rho(y, \psi x) = 0. \]
Thus, $\frac{1}{8} \le \alpha \frac{1}{4}$ holds only if $\alpha \ge \frac{1}{2}$. \\
Hence $\psi$ is not a $2$-Chatterjea contraction.
\end{example}

\begin{remark}
From Example \ref{A} and \ref{B}, we can conclude that the classes of $2$-Kannan contraction and $2$-Chatterjea contractions are independent.
\end{remark}

The following corollary follows directly from the proof of Theorem \ref{2.1}, as the continuity presumption was not used to establish convergence of the iterative sequence.

\begin{corollary}
Let $(X, \rho)$ be a complete metric space and let $\psi : X \to X$ satisfy (\ref{2.1}) for some $\alpha \in \left[0, \frac{1}{2}\right)$. Then for any arbitrary initial 
point $x \in X$, the iterative sequence $(\psi^n x)$ converges in $(X, \rho)$ to the same point $x^* \in X$ and $Fix(\psi) \subseteq \{x^*\}$.
\end{corollary}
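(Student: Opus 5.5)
The plan is to rerun the convergence part of the proof of Theorem \ref{2.1}, which never used $k$-continuity, and then obtain the fixed-point inclusion directly from (\ref{2.1}). Fix $x_0\in X$, put $x_n=\psi^n x_0$ and $\rho_n=\rho(x_n,x_{n+1})$. If $\alpha=0$, then $\psi^2$ is constant with value some $x^*$, so every orbit is eventually equal to $x^*$. If $\alpha>0$, applying (\ref{2.1}) to $x=x_{n-2}$, $y=x_{n-1}$ and using the triangle inequality gives $\rho_n\le \alpha(\rho_{n-1}+\rho_{n-2})$, which is (\ref{2.2}).

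Rather than solving the recurrence explicitly, I will compare $\rho_n$ with a geometric sequence. Let $r_1=\frac{\alpha+\sqrt{\alpha^2+4\alpha}}{2}$, so $r_1\in(0,1)$ for $\alpha<\frac12$ and $r_1^2=\alpha(r_1+1)$. Set $M=\max(\rho_0,\rho_1/r_1)$. By induction on $n$ one gets $\rho_n\le M r_1^n$, since
\[
\rho_n\le\alpha\left(M r_1^{n-1}+M r_1^{n-2}\right)=M r_1^{n-2}\alpha(r_1+1)=M r_1^n .
\]
Hence $\sum_n\rho_n<\infty$, so $(x_n)$ is Cauchy and converges to some $x^*=x^*(x_0)$ by completeness.

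The main obstacle is showing that the limit does not depend on the starting point. The paper's displayed estimate at this step is garbled, so I will prove it directly from (\ref{2.1}). Take another orbit $y_n=\psi^n y_0$ with limit $y^*$, and let $d_n=\rho(x_n,y_n)$. Applying (\ref{2.1}) to $x_{n-2}$ and $y_{n-2}$ gives
\[
d_n\le\alpha\left(\rho(x_{n-2},y_{n-1})+\rho(y_{n-2},x_{n-1})\right).
\]
As $n\to\infty$, the left side tends to $\rho(x^*,y^*)$ and the right side tends to $2\alpha\,\rho(x^*,y^*)$, because all four sequences converge. Since $2\alpha<1$, this forces $\rho(x^*,y^*)=0$. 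So every orbit converges to the same point $x^*$.

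Finally, let $z\in Fix(\psi)$. Its orbit is the constant sequence $z,z,\dots$, and this orbit must converge to $x^*$, so $z=x^*$ and $Fix(\psi)\subseteq\{x^*\}$. Alternatively, $\rho(x^*,z)=\rho(\psi^2 x^*,\psi^2 z)\le\alpha(\rho(x^*,z)+\rho(z,\psi x^*))$ gives the same conclusion, but the orbit argument is shorter and avoids needing $\psi x^*$.
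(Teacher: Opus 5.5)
Your proof is correct. It follows the paper's outline (the paper proves the corollary only by pointing back to the proof of Theorem 2.1), but it differs in three places.

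\textbf{Summability.} The paper bounds $\rho_n$ by the exact solution $a_n=Pr_1^n+Qr_2^n$ of $a_n=\alpha(a_{n-1}+a_{n-2})$. That needs the explicit constants and the bound $|r_2|<r_1<1$. Your majorant $\rho_n\le Mr_1^n$ uses only $r_1^2=\alpha(r_1+1)$ and reaches the same conclusion more cheaply.

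\textbf{Common limit of all orbits.} This is the main step of the corollary. The paper's displayed estimate is $\rho(\psi^n x_0,\psi^n y_0)\le\rho(\psi^n x_0,\psi^n x_0)+\rho(\psi^n y_0,\psi^n y_0)$. Its right-hand side is identically $0$, so as printed it is false and establishes nothing. Your argument applies (2.1) to $x_{n-2},y_{n-2}$ and passes to the limit to get $\rho(x^*,y^*)\le2\alpha\,\rho(x^*,y^*)$. That genuinely proves the step.

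\textbf{Fixed-point inclusion.} The uniqueness computation in Theorem 2.1 uses $\psi x^*=x^*$, which came from $k$-continuity and is not available here. Deducing $Fix(\psi)\subseteq\{x^*\}$ from the common-limit property, as you do, is exactly right in this setting.

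One correction: drop or repair your ``Alternatively'' sentence. The equality $\rho(x^*,z)=\rho(\psi^2x^*,\psi^2z)$ presupposes $\psi^2x^*=x^*$, which cannot be used without continuity. The resulting bound also still contains $\rho(z,\psi x^*)$, which you have no control over. A valid direct version applies (2.1) to $z$ and $x_n$:
$\rho(z,x_{n+2})\le\alpha\bigl(\rho(z,x_{n+1})+\rho(x_n,z)\bigr)$.
Letting $n\to\infty$ gives $\rho(z,x^*)\le2\alpha\,\rho(z,x^*)$, hence $z=x^*$.
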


\begin{corollary}
A $2$-Chatterjea contraction on a complete metric space has at most one fixed point.
\end{corollary}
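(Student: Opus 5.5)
The most direct route is the uniqueness computation at the end of the proof of Theorem \ref{2.1}. That computation never uses $k$-continuity or completeness; it only uses inequality (\ref{2.1}) and the fact that the points involved are fixed points. So the plan is to take two arbitrary fixed points and show they coincide.

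Let $\psi$ be a $2$-Chatterjea contraction with constant $\alpha\in\left[0,\frac12\right)$, and suppose $u,v\in X$ satisfy $\psi u=u$ and $\psi v=v$. Then $\psi^2u=u$ and $\psi^2v=v$. Putting $x=u$, $y=v$ in (\ref{2.1}) gives
\[
\rho(u,v)=\rho(\psi^2u,\psi^2v)\le \alpha\bigl(\rho(u,\psi v)+\rho(v,\psi u)\bigr)=2\alpha\,\rho(u,v).
\]
Since $2\alpha<1$, this forces $(1-2\alpha)\rho(u,v)\le 0$, so $\rho(u,v)=0$ and $u=v$. Hence $\psi$ has at most one fixed point. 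Completeness is not needed for this argument.

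The corollary also follows from the preceding corollary: every $x\in Fix(\psi)$ lies in $\{x^*\}$, where $x^*$ is the common limit of all iterative sequences. That route does use completeness, to get convergence of $(\psi^n x)$. It can be checked independently: if $\psi x=x$, then the iterative sequence starting at $x$ is constant, so its limit $x^*$ equals $x$.

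There is no real obstacle. The only point needing care is the endpoint $\alpha=\frac12$, which the statement of Theorem \ref{2.1} allows but for which $2\alpha<1$ fails. I will therefore take the definition of a $2$-Chatterjea contraction to require $\alpha<\frac12$, as in the preceding corollary, and note this explicitly.
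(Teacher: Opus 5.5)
Your proposal is correct and follows the paper's argument. Your direct estimate $\rho(u,v)\le 2\alpha\rho(u,v)$ is exactly the uniqueness step at the end of the proof of Theorem~\ref{2.1}, and your second route through $Fix(\psi)\subseteq\{x^*\}$ is how the paper intends the corollary to follow from the one just before it. Your insistence on $\alpha<\frac12$ is also well founded: for $\alpha=\frac12$ the identity map satisfies (\ref{2.1}) and every point is fixed.
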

\section{$m$-Chatterjea Mappings}
The earlier notion of $2$-Chatterjea mappings exhibits several noteworthy properties and admits a natural generalization to any natural number $m$ via the concept of $m$-Chatterjea mappings.

\begin{definition}
If $(X, \rho)$ is a metric space. Then a mapping $\psi: X \to X$ is called $m$-Chatterjea contraction if there exists $\alpha \in \left[0, \frac{1}{2}\right)$ and $m \in \mathbb{N}$ with $m \geq 3$ such that for all $x, y \in X$, the following inequality holds:
\begin{equation}
\rho(\psi^m x, \psi^m y) \leq \alpha (\rho(x, \psi^m y) + \rho(y, \psi^m x)). \tag{3.1}\label{3.1}
\end{equation}
\end{definition}
\begin{remark}
For $m=1$, this reduces to the classical Chatterjea contraction. As was seen for the case $m = 2$ in Example \ref{A}, not any m-Chatterjea contraction is a Chatterjea contraction but the reverse statement holds.
\end{remark}

\begin{theorem}\label{D}
If $(X, d)$ is a metric space and let $\psi: X \to X$ be a $K$-continuous mapping for some $K \in \mathbb{R}^+$. Suppose that for some $m \in \mathbb{N}$ with $m \geq 3$ and $\alpha \in \left[0, \frac{1}{2}\right)$, the inequality (\ref{3.1})holds for all $x, y \in X$. Then $\psi$ has a unique fixed point in $X$ and for arbitrary initial point $x_0 \in X$, the iterative sequence $(\psi^n x_0)$ converges to this fixed point.
\end{theorem}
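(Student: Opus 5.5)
I read (\ref{3.1}) in the sense announced in the Introduction, where the right-hand side is kept ``in its original form'':
\[
\rho(\psi^m x,\psi^m y)\le \alpha\bigl(\rho(x,\psi y)+\rho(y,\psi x)\bigr),
\]
so that it reduces to (\ref{2.1}) when $m=2$. Two further readings are needed. First, I take $(X,d)$ to be complete, as in Theorem \ref{2.1}; without completeness the iterative sequence need not have a limit in $X$. Second, I take $K\in\mathbb{N}$, since $\psi^K$ only makes sense for natural $K$. I will write $\rho$ for $d$.

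The plan follows the proof of Theorem \ref{2.1}, with the explicit second-order recurrence replaced by a ``maximum over a window'' estimate. Fix $x_0$, set $x_n=\psi^n x_0$ and $\rho_n=\rho(x_n,x_{n+1})$. The case $\alpha=0$ is handled as in Theorem \ref{2.1}: $\psi^m$ is constant, equal to some $x^*$, and then $\psi x^*=\psi^{m+1}x^*=\psi^m(\psi x^*)=x^*$.

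For $\alpha>0$ and $n\ge m$, I substitute $x=x_{n-m}$ and $y=x_{n-m+1}$ into (\ref{3.1}). This gives
\[
\rho_n\le \alpha\,\rho(x_{n-m},x_{n-m+2})\le \alpha(\rho_{n-m}+\rho_{n-m+1}).
\]
Put $M_n=\max\{\rho_{n-m},\dots,\rho_{n-1}\}$. Then $\rho_n\le 2\alpha M_n$, and in particular $\rho_n\le M_n$. Hence $M_{n+1}\le M_n$, so $M$ is nonincreasing. Moreover, after $m$ more steps every entry of the window is at most $2\alpha M_n$, which gives $M_{n+m}\le 2\alpha M_n$. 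By induction,
\[
\rho_n\le (2\alpha)^{\lfloor n/m\rfloor-1}\max\{\rho_0,\dots,\rho_{m-1}\}.
\]
Since $2\alpha<1$, the series $\sum\rho_n$ converges. Therefore $(x_n)$ is Cauchy and converges to some $x^*$ by completeness. This step replaces the characteristic-root computation of Theorem \ref{2.1}. (Equivalently, every root of $r^m=\alpha(1+r)$ satisfies $|r|<1$: if $|r|\ge 1$, then $\alpha|1+r|\le 2\alpha|r|<|r|\le|r|^m$.)

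Next I show that the limit does not depend on the starting point. Take another starting point $y_0$ with orbit $y_n=\psi^n y_0$, which converges to some $y^*$ by the same argument. Applying (\ref{3.1}) to $\psi^{n-m}x_0$ and $\psi^{n-m}y_0$ gives
\[
\rho(x_n,y_n)\le\alpha\bigl(\rho(x_{n-m},y_{n-m+1})+\rho(y_{n-m},x_{n-m+1})\bigr).
\]
Letting $n\to\infty$ yields $\rho(x^*,y^*)\le 2\alpha\rho(x^*,y^*)$, so $y^*=x^*$. In particular, $\psi^n x^*\to x^*$.

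Now I use $K$-continuity. Continuity of $\psi^K$ gives
\[
\psi^K x^*=\lim_{n\to\infty}\psi^{K+n}x^*=x^*.
\]
Consequently $\psi^{Kn+1}x^*=\psi x^*$ for every $n$. But $(\psi^{Kn+1}x^*)_n$ is a subsequence of the orbit of $x^*$, which converges to $x^*$, so $\psi x^*=x^*$.

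For uniqueness, let $\psi y=y$. Then (\ref{3.1}) gives $\rho(x^*,y)=\rho(\psi^m x^*,\psi^m y)\le 2\alpha\rho(x^*,y)$, so $y=x^*$.

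The main obstacle is the Cauchy estimate. The recurrence has order $m$, and there is no convenient closed form as there was for $m=2$. The window-maximum argument is what I would use to get around this. The only other delicate point is the limit passage in the independence step, where two interleaved orbits have to be handled simultaneously.
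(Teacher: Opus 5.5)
Your proof is correct and follows essentially the same route as the paper. The common steps are:
\begin{itemize}
\item the recurrence $\rho_n\le\alpha(\rho_{n-m}+\rho_{n-m+1})$, obtained from the inequality with $x=x_{n-m}$, $y=x_{n-m+1}$;
\item geometric decay by the factor $2\alpha$ over each block of $m$ consecutive steps, where your sliding-window maximum $M_n$ plays the role of the paper's block induction (3.2) with $a(x)=\sum_{i=0}^{m-1}\rho(\psi^i x,\psi^{i+1}x)$;
\item completeness, independence of the limit from the starting point, $\psi^K x^*=x^*$ and hence $\psi x^*=x^*$ via the orbit of $x^*$, and uniqueness from $\rho(x^*,y)\le 2\alpha\rho(x^*,y)$.
\end{itemize}
Your readings (right-hand side $\rho(x,\psi y)+\rho(y,\psi x)$, completeness, $K\in\mathbb{N}$) are exactly what the paper's proof actually uses. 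Your direct limit passage $\rho(x^*,y^*)\le 2\alpha\rho(x^*,y^*)$ is a tidier replacement for the paper's iteration argument in the independence step.
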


\begin{proof}
Let $x_0 \in X$ be arbitrary and define a sequence $x_n = \psi^n x_0$ for $n \in \mathbb{N}$. Denote $\rho_n = d(x_n, x_{n+1})$. Also define for any $x \in X$,
\[
a(x) = \sum_{i=0}^{m-1} \rho(\psi^i x, \psi^{i+1} x).
\]
We will prove that 
\begin{equation}
\rho(\psi^{nm+l} x, \psi^{nm+l+1} x) \leq 2^{n-1} \alpha^n a(x) \tag{3.2}\label{3.2}
\end{equation}
 holds for all $n \in \mathbb{N}$ and $l \in \{0, \dots, m-1\}$. 

To apply the principle of mathematical induction, we need to verify that this inequality holds for $n=1$.\\
For $l \in \{0, \dots, m-2\}$, we have
\begin{align*}
d(\psi^{m+1} x, \psi^{m+l+1} x) &\leq \alpha \left( \rho(\psi^l x, \psi^{l+2} x) + \rho(\psi^{l+1} x, \psi^{l+1} x)\right) \\
&\leq \alpha \left( \rho(\psi^l x, \psi^{l+1} x) + \rho(\psi^{l+1} x, \psi^{l+2} x)\right) \\
&= \alpha (\rho_l + \rho_{l+1}) \\
&\leq \alpha a(x).
\end{align*}
For $l = m-1$, we have
\begin{align*}
\rho(\psi^{2m-1} x, \psi^{2m} x) &\leq \alpha \left(\rho(\psi^{m-1} x, \psi^{m+1} x) + \rho(\psi^m x, \psi^m x)\right) \\
&\leq \alpha \left(\rho(\psi^{m-1} x, \psi^m x) + \rho(\psi^m x, \psi^{m+1} x)\right) \\
&= \alpha (\rho_{m-1} + \rho_m).
\end{align*}
But $\rho_m$ itself can be bounded using the previous case with $l=0$: $\rho_m \leq \alpha(\rho_0 + \rho_1)$. Hence
\[ \rho(\psi^{2m-1} x, \psi^{2m}x) \leq \alpha \rho_{m-1}+ \alpha^2 (\rho_0 + \rho_1)\leq \alpha a(x). \]
$$\implies \rho(\psi^{m+l} x, \psi^{m+l+1} x) \leq \alpha a(x).$$\\
Assume that for some $n \geq 1$ and for all $l \in \{0, \dots, m-1\}$,
\[ \rho(\psi^{nm+l} x, \psi^{nm+l+1} x) \leq 2^{n-1} \alpha^n a(x). \]

In order to prove that (\ref{3.2}) holds for any $n \in \mathbb{N}$ and $l \in \{0, \dots, m-1\}$, observe that
\begin{align*}
\rho(\psi^{(n+1)m+l} x, \psi^{(n+1)m+l+1} x) &\leq \alpha \left(\rho(\psi^{nm+l} x, \psi^{nm+l+2} x)+\rho(\psi^{nm+l+1} x, \psi^{nm+l+1} x)\right) \\
&\leq \alpha \left( \rho(\psi^{nm+l} x, \psi^{nm+l+1}x) + \rho(\psi^{nm+l+1} x, \psi^{nm+l+2}x )\right) \\
&\leq \alpha ( 2^{n-1} \alpha^n a(x) + 2^{n-1} \alpha^n a(x) ) \\
&= 2^n \alpha^{n+1} a(x).
\end{align*}
holds for any $l \in \{0, 1, \dots, m-2\}$.\\
If $l = m-1$, then
\begin{align*}
\rho(\psi^{(n+2)m-1} x, \psi^{(n+2)m} x) &\leq \alpha\left( \rho(\psi^{(n+1)m-1} x, \psi^{(n+1)m+1} x) \right)\\
&\leq \alpha \left(\rho(\psi^{(n+1)m-1} x, \psi^{(n+1)m} x) + \rho(\psi^{(n+1)m} x, \psi^{(n+1)m+1} x)\right) \\
&\leq \alpha \rho(\psi^{(n+1)m-1} x, \psi^{(n+1)m} x) + \alpha \rho(\psi^{nm} x, \psi^{nm+2} x) \\
&\leq \alpha \left( 2^{n-1} \alpha^n a(x) + 2^{n} \alpha^{n+1} a(x) \right) \\
&= 2^{n-1} \alpha^{n+1} (1 + 2\alpha) a(x) \\
&< 2^{n-1} \alpha^{n+1} a(x).
\end{align*}
Thus, by the principle of mathematical induction, we find that (\ref{3.2}) holds for any $n \in \mathbb{N}$ and $l \in \{0, \dots, m-1\}$.

For any $n_2, n_1 \in \mathbb{N}$ with $n_2 \geq n_1$, $k_i=\left[\frac{n_i}{m}\right]$ write $n_i = k_im + l_i$ with $l_i \in \{0, \dots, m-1\}$. Then
\begin{align*}
\rho(x_{n1}, x_{n_2}) &\le \sum_{i=n_1}^{n_2-1} \rho_i \\
&\le \sum_{i=k_1m}^{k_2m+m-1} \rho_i \\
&\le 2^{k_1-1} \alpha^{k_1} (m - l_1) a(x_0) + \sum_{i=k_1+1}^{k_2-1} 2^{i-1} \alpha^i m a(x_0) + 2^{k_2-1} \alpha (l_2 + 1) a(x_0) \\
&\le m a(x_0) \sum_{i=k_1}^{\infty} 2^{i-1} \alpha^i.
\end{align*}
Since $2\alpha < 1$, the series $\sum_{i=1}^{\infty} 2^{i-1} \alpha^i$ converges. So, $\lim \limits_{m,n \to \infty} \rho(x_m, x_n) = 0$. By completeness of $X$, there exists $x^* \in X$ such that $\lim \limits_{n \to \infty} x_n = x^*$.\\
Moreover, if $y_0 \in X$ be arbitrary, then in a similar manner we will obtain that $(\psi^ny)$ is a convergent sequence and 
\[ \rho(x_{nm}, y_{nm}) \le \alpha \left( \rho(x_{(n-1)m}, x_{(n-1)m+1}) + \rho(y_{(n-1)m}, y_{(n-1)m+1})\right), \]
and by iteration $\rho(x_{nm}, y_{nm}) \to 0$ as $n \to \infty$. Thus all iterative sequences converge to the same limit $x^*$ for any $y \in X$.\\
Since $\psi$ is $k$-continuous, $\psi^k$ is continuous. So
\[ \psi^k x^* = \psi^k \left(\lim \limits_{n \to \infty} \psi^nx^*\right) = \lim \limits_{n \to \infty} \psi^{k+n}x^* = x^*. \]
Note that for any $m \in \mathbb{N}$,
\[\rho(x^*, \psi x^*)=\rho(\psi^{kn}x^*, \psi^{kn+1} x^*)\leq \sum_{i=kn}^{kn+1}\rho(\psi^ix^*, \psi^{i+1} x^*)\to 0~\text{as}~n\to\infty.\]
Since the series $\sum \rho_i$ converges. Therefore $\psi x^* = x^*$.\\
Suppose $y \in X$ satisfy $\psi y = y$. Then
\[ \rho(x^*, y) = \rho(\psi^m x^*, \psi^m y) \le \alpha \left(\rho(x^*, \psi y) + \rho(y, \psi x^*)\right) = 2\alpha \rho(x^*, y). \]
Since $2\alpha < 1$, we must have $\rho(x^*, y) = 0$ and hence $x^*$ is a unique fixed point of the mapping $\psi$.
\end{proof}
\begin{corollary}
Let $(X, \rho)$ be a complete metric space and $\psi : X \to X$ is a mapping such that for any $x, y \in X$ satisfying (\ref{3.1}) for some $\alpha \in \left[0, \frac{1}{2}\right)$ and $m \ge 3$. Then for any arbitrary point $x \in X$, the iterative sequence $(\psi^n x)$ converges in $(X, \rho)$ to the same point $x^* \in X$ and $Fix(\psi) \subseteq \{x^*\}$.
\end{corollary}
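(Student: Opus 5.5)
The corollary should follow from the first half of the proof of Theorem \ref{D}, where the $K$-continuity of $\psi$ is never used. I would extract that part verbatim and then add a short uniqueness argument that needs no continuity either.

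\textbf{Step 1: convergence of one orbit.} Fix $x_0\in X$, put $x_n=\psi^n x_0$, $\rho_n=\rho(x_n,x_{n+1})$ and $a(x_0)=\sum_{i=0}^{m-1}\rho_i$. Applying (\ref{3.1}) to the consecutive points $\psi^{nm+l}x_0$ and $\psi^{nm+l+1}x_0$, together with the triangle inequality, gives by induction the block estimate (\ref{3.2}):
\[
\rho(\psi^{nm+l}x_0,\psi^{nm+l+1}x_0)\le 2^{n-1}\alpha^n a(x_0),\qquad 0\le l\le m-1 .
\]
Summing over blocks of length $m$ bounds $\rho(x_{n_1},x_{n_2})$ by $m\,a(x_0)\sum_{i\ge k_1}2^{i-1}\alpha^i$. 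This tail tends to $0$ because $2\alpha<1$. So $(x_n)$ is Cauchy, $\sum\rho_n<\infty$, and completeness gives a limit $x^*$.

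\textbf{Step 2: the limit does not depend on the starting point.} For another $y_0$, the same argument shows $(\psi^n y_0)$ converges, say to $y^*$. To identify $y^*$ with $x^*$, I would apply (\ref{3.1}) with $x=x_{nm}$ and $y=y_{nm}$. The two points $\psi^m x$ and $\psi^m y$ on the left are $x_{(n+1)m}$ and $y_{(n+1)m}$, but the right-hand side involves cross terms between the two orbits:
\[
\rho(x_{(n+1)m},y_{(n+1)m})\le\alpha\bigl(\rho(x_{nm},y_{(n+1)m})+\rho(y_{nm},x_{(n+1)m})\bigr).
\]
Let $n\to\infty$. Both sides converge by continuity of the metric, so $\rho(x^*,y^*)\le 2\alpha\,\rho(x^*,y^*)$. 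Since $2\alpha<1$, this forces $x^*=y^*$. This limiting argument is cleaner than the iteration sketched in the proof of Theorem \ref{D}, and I would use it instead.

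\textbf{Step 3: $Fix(\psi)\subseteq\{x^*\}$.} If $\psi z=z$, then the orbit of $z$ is constant, so its limit is $z$. By Step 2 this limit equals $x^*$, hence $z=x^*$. Alternatively, $\rho(x^*,z)=\rho(\psi^m x^*,\psi^m z)$ is not available directly, because $x^*$ need not be fixed. That is precisely why the orbit argument is the right one: it avoids any continuity.

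\textbf{Main obstacle.} The genuinely delicate step is the induction for (\ref{3.2}) in the boundary case $l=m-1$, where the estimate reaches into the next block. There I would rely on the bound already established in Theorem \ref{D} rather than redo it. If that bound turns out to be shaky, the fallback is to use any geometric bound of the form $\rho_{nm+l}\le C\beta^n a(x_0)$ with $\beta<1$, which is all that Steps 1 to 3 actually need.
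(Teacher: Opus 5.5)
Your argument is correct once one reading of (\ref{3.1}) is fixed. Steps 1 and 3 are what the paper intends: the corollary is the continuity-free half of the proof of Theorem \ref{D}, and $Fix(\psi)\subseteq\{x^*\}$ holds because a fixed point has a constant orbit.

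The genuine difference is Step 2. The paper identifies the limits by iterating an estimate for $D_n=\rho(x_{nm},y_{nm})$. Done properly, (\ref{3.1}) and the triangle inequality give $D_n\le 2\alpha D_{n-1}+\alpha\bigl(\rho(x_{(n-1)m},x_{(n-1)m+1})+\rho(y_{(n-1)m},y_{(n-1)m+1})\bigr)$. Then $D_n\to0$, since $2\alpha<1$ and the perturbation tends to $0$. The inequality displayed in the paper drops the $2\alpha D_{n-1}$ term. That route yields a rate for $D_n$. Yours first secures convergence of both orbits and then passes to the limit in a single instance of (\ref{3.1}). This is shorter, needs only continuity of the metric, and works whichever form of the right-hand side is used.

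There are two points to tidy up.

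\textbf{The reading of (\ref{3.1}).} Your Step 2 display uses (\ref{3.1}) as printed, with $\psi^m y,\psi^m x$ on the right. However, the block estimate (\ref{3.2}) you import in Step 1 is derived in Theorem \ref{D} with $\psi y,\psi x$ on the right, and Example \ref{C} is checked with that form too. Under the printed form, applying (\ref{3.1}) to consecutive points does not give (\ref{3.2}); that form is simply Chatterjea's condition for $\psi^m$. With the intended form, your Step 2 instance reads $\rho(x_{(n+1)m},y_{(n+1)m})\le\alpha\bigl(\rho(x_{nm},y_{nm+1})+\rho(y_{nm},x_{nm+1})\bigr)$. It has the same limit $2\alpha\rho(x^*,y^*)$, so nothing breaks.

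\textbf{The boundary case.} The case you were worried about is sound and takes two lines, so verify it rather than defer to the paper. From $\rho_{j+m}\le\alpha(\rho_j+\rho_{j+1})$ and the already-proved case $l=0$ at level $n+1$, you get $\rho_{(n+2)m-1}\le\alpha\bigl(2^{n-1}\alpha^n+2^n\alpha^{n+1}\bigr)a(x_0)=2^{n-1}\alpha^{n+1}(1+2\alpha)a(x_0)\le 2^n\alpha^{n+1}a(x_0)$, because $1+2\alpha<2$. The paper's final comparison with $2^{n-1}\alpha^{n+1}a(x)$ is a misprint. Its base case uses $m\ge3$ so that $\rho_0,\rho_1,\rho_{m-1}$ are distinct summands of $a(x_0)$.
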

\begin{corollary}
An $m$-Chatterjea contraction for $m \ge 3$ on a complete metric space has atmost one fixed point. 
\end{corollary}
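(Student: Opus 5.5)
The plan is to argue directly from the contractive inequality (\ref{3.1}), without using completeness or the convergence part of the preceding corollary. The only input is that a fixed point of $\psi$ is also a fixed point of every iterate. Alternatively, the statement follows immediately from the preceding corollary, since $Fix(\psi)\subseteq\{x^*\}$ already says that $Fix(\psi)$ has at most one element. The direct argument is shorter and does not depend on the convergence analysis in Theorem \ref{D}, so I will give that one.

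Suppose $\psi$ satisfies (\ref{3.1}) with some $\alpha\in\left[0,\frac12\right)$ and $m\ge 3$, and let $u,v\in X$ with $\psi u=u$ and $\psi v=v$. First, by induction on $j$, we get $\psi^j u=u$ and $\psi^j v=v$ for all $j\in\mathbb{N}$, and in particular $\psi^m u=u$ and $\psi^m v=v$. Next, substituting $x=u$ and $y=v$ into (\ref{3.1}) gives
\[
\rho(u,v)=\rho(\psi^m u,\psi^m v)\le \alpha\bigl(\rho(u,\psi^m v)+\rho(v,\psi^m u)\bigr)=\alpha\bigl(\rho(u,v)+\rho(v,u)\bigr)=2\alpha\,\rho(u,v).
\]
Hence $(1-2\alpha)\rho(u,v)\le 0$. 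Since $2\alpha<1$, this forces $\rho(u,v)=0$, that is, $u=v$.

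No step here is a genuine obstacle. The only point needing care is to use (\ref{3.1}) exactly as stated in the definition, with $\psi^m$ rather than $\psi$ on the right-hand side. With fixed points both forms collapse to the same expression, so the estimate is unaffected. I would also remark that completeness of $(X,\rho)$ is not actually needed for uniqueness; it is retained in the statement only to match the setting of Theorem \ref{D}.
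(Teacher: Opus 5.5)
Your argument is correct, but it reaches the conclusion by a different route from the paper.

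The paper places this corollary directly after the one asserting $Fix(\psi)\subseteq\{x^*\}$ and derives it from that. That in turn rests on the convergence analysis in the proof of Theorem \ref{D}: a fixed point has a constant orbit, and all orbits converge to the common limit $x^*$. This argument needs completeness.

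Your direct $2\alpha$-estimate between two arbitrary fixed points appears in the paper only as the final uniqueness step of Theorem \ref{D}, where a fixed point is compared with the already-constructed $x^*$.

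Your route has two advantages:
\begin{itemize}
\item It is independent of completeness and of the convergence estimates.
\item It does not matter whether the right-hand side of (\ref{3.1}) is read with $\psi$ or with $\psi^m$, as you noticed. This is a real advantage: (\ref{3.1}) is written with $\psi^m$, but the paper's convergence argument (and Example \ref{C}) actually use $\psi$ there.
\end{itemize}

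The paper's route gives extra information in exchange: any fixed point must be the common limit of all iterative sequences.
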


We now give an example of $3$-Chatterjea contraction which is neither a Chatterjea nor a $2$-Chatterjea contraction and has no fixed point, illustrating the necessity of the continuity hypothesis.

\begin{example}\label{C}
 Let $X = [0, 1]$ with the Euclidean metric and define $\psi : X \to X$ by
\[
\psi x = 
\begin{cases} 
\frac{1}{8}, & x = 0, \\
\frac{x}{8}, & x \in (0, 1]. 
\end{cases}
\]
The mapping $\psi$ is neither a Chatterjea contraction nor a $2$-Chatterjea contraction.
However
\[
\psi^3 x = 
\begin{cases} 
\frac{1}{512}, & x = 0, \\
\frac{x}{512}, & x \in (0, 1]
\end{cases}
\]
satisfy (\ref{3.1}). In order to prove that we will treat three cases:\\
\begin{enumerate}
\item If $x, y \in (0, 1]$, then
\begin{align*}
    \rho(\psi^3 x, \psi^3 y) &= \left| \frac{x-y}{512} \right| \\
    &= \frac{1}{8} \frac{|x-y|}{64} \\
    &\le \frac{1}{8} \frac{x+y}{64} \\
    &\le \frac{1}{8} (\rho(x, \psi y) + \rho(y, \psi x)).
\end{align*}
\item If $x=0$, $y \in (0,1]$ (Symmetric case is analogous), then
\begin{align*}
    \rho(\psi^3 x, \psi^3 y) &= \frac{1-y}{512} \\
    &= \frac{1}{8} \left( \frac{1}{64} - \frac{y}{64} \right) \\
    &\le \frac{1}{8} \rho(x, \psi y) \\
    &\le \frac{1}{8} (\rho(x, \psi y) + \rho(y, \psi x)).
\end{align*}
\end{enumerate}
Thus, $\psi$ is a $3$-Chatterjea contraction but none of the iterates of $\psi$ possess a fixed point in $X$.
\end{example}
\begin{corollary}
If $3$-Chatterjea contraction on a complete metric space does not have a fixed point, then the unique limit point of all iterative sequences $(\psi^n x)$ for any $x \in X$ is the point of discontinuity of the mapping $\psi$.
\end{corollary}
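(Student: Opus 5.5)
The plan is to argue by contraposition, using the corollary preceding Example \ref{C} (for $m\ge 3$) together with the continuity part of the proof of Theorem \ref{D}. Let $\psi$ be a $3$-Chatterjea contraction on a complete metric space $(X,\rho)$. By that corollary, for every $x\in X$ the sequence $(\psi^n x)$ converges, and the limit $x^*$ is the same for every starting point. This does not use any continuity hypothesis. Hence $x^*$ is the unique common limit of all iterative sequences. The claim to prove is: if $\psi$ is continuous at $x^*$, then $\psi x^* = x^*$. Equivalently, if $\psi$ has no fixed point, then $x^*$ must be a point of discontinuity of $\psi$.

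So assume $\psi$ is continuous at $x^*$. The key observation is that the proof of Theorem \ref{D} uses $k$-continuity only at the single point $x^*$. Take $k=1$ and the starting point $x^*$ itself. Since $\psi^n x^*\to x^*$, sequential continuity at $x^*$ gives
\[
\psi x^* = \psi\Bigl(\lim_{n\to\infty}\psi^n x^*\Bigr) = \lim_{n\to\infty}\psi^{n+1}x^* = x^*.
\]
Thus $x^*$ is a fixed point, contradicting the hypothesis. Alternatively, one can take any starting point $x_0$ with $x_n\to x^*$ and use $\psi x_n = x_{n+1}\to x^*$ together with $\psi x_n\to\psi x^*$. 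Uniqueness of limits in a metric space then gives $\psi x^*=x^*$.

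It remains only to make sure that the word ``unique'' in the statement is justified. Uniqueness follows from the same corollary, since all orbits share the limit $x^*$. The contrapositive then yields the statement directly. I expect no real obstacle here. The only point needing care is to state explicitly that completeness plus the $m=3$ case of the corollary gives convergence of $(\psi^n x)$ to a common $x^*$ without any continuity assumption. Example \ref{C} illustrates the situation: there $x^*=0$, and $\psi$ is discontinuous at $0$.
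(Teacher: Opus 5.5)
Your proof is correct and takes the same route the paper implicitly relies on. The corollary for $m\ge 3$ gives a common limit $x^*$ of all orbits without any continuity assumption. The $k$-continuity step from the proof of Theorem \ref{D}, used only at $x^*$ and with $k=1$, then forces $\psi x^*=x^*$, so if there is no fixed point, $\psi$ must be discontinuous at $x^*$.

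The paper's own justification that all orbits share one limit is shaky, so you may prefer a direct argument for that step. Let $n\to\infty$ in $\rho(\psi^{n+3}x,\psi^{n+3}y)\le\alpha\bigl(\rho(\psi^n x,\psi^{n+1}y)+\rho(\psi^n y,\psi^{n+1}x)\bigr)$; this gives $\rho(x^*,y^*)\le 2\alpha\,\rho(x^*,y^*)$, and hence $x^*=y^*$ because $2\alpha<1$.
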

\begin{example}
Analyzing the Example \ref{C}, for $x > 0$, $\psi x = \frac{x}{8} = x \Rightarrow x = 0$, not possible. For $x = 0, \psi 0 = \frac{1}{8} \ne 0$. Thus no fixed point exists.\\
 For any $x \in (0,1], \psi^n x = \frac{x}{8^n} \to 0$ as $n \to \infty$ and from $x = 0,$ we get $\psi^n 0 = \frac{1}{8^n} \to 0$. Thus every iterative sequence converges to $0$.
Observe that $\psi$ is discontinuous at $0$. Take $x_n = \frac{1}{8^n} \to 0$. Then $\psi_{x_n} = \frac{1}{8^{n}} \to 0$ but $\psi_0 = \frac{1}{8} \neq 0$. Moreover,
for any $k \in \mathbb{N}$, $\psi^k$ is also discontinuous at $0$ because $\psi^k(\frac{1}{n}) = \frac{1}{8^kn} \to 0$ while $\psi^k_0 = \frac{1}{8^k} \neq 0$.
This demonstrate the necessity of $k$-continuity hypothesis in Theorem \ref{D}.
\end{example}
From the proof of Theorem \ref{D}, we can find that it is sufficient for $\psi^3$ to be continuous at the limit of iterative sequence
$(\psi^n)$ in order to have fixed point. In general, it is not a necessary assumption.

\begin{example}\label{E}
Let $X = [0, 1]$ be equipped with Euclidean metric. Define mapping $\psi : X \to X$ as follows:
$$
\psi_x = 
\begin{cases} 
0, & x = 0, \\
0, & x = \frac{1}{4}, \\
\frac{1}{4}, & x = \frac{1}{2}, \\
\frac{1}{2}, & x \in [0, 1] \setminus \{0, \frac{1}{4}, \frac{1}{2}\}.
\end{cases}
$$
For any sequence $x_n$ with $x_n \to \frac{1}{2}$ and
$x_n \neq \frac{1}{2}$ $\left(e.g., x_n = \frac{1}{2} + \frac{1}{n}, n \geq 3\right)$, we have
$\psi(x_n) = \frac{1}{2}$ for all $n$. Hence $\lim \limits_{n \to \infty} \psi(x_n) = \frac{1}{2}$.
However, $\psi(\frac{1}{2}) = \frac{1}{4}$. Thus $\psi$ is not continuous
at $x = \frac{1}{2}$.
We now compute $\psi^3$ for every $x \in [0, 1]$:
\begin{enumerate}
\item If $x = 0$, then\\
$\psi(0) = 0$, $\psi^2(0) = \psi(0) = 0$, and $\psi^3(0) = \psi(0) = 0$.
\item If $x = \frac{1}{4}$, then\\
$\psi(\frac{1}{4}) = 0$, $\psi^2(\frac{1}{4}) = \psi(0) = 0$ and $\psi^3(\frac{1}{4}) = \psi(0) = 0$.
\item If $x = \frac{1}{2}$, then\\
$\psi(\frac{1}{2}) = \frac{1}{4}$, $\psi^2(\frac{1}{2}) = \psi(\frac{1}{4}) = 0$, and $\psi^3(\frac{1}{2}) = \psi(0) = 0$.

\item For any other $x \in [0, 1] \setminus \{0, \frac{1}{4}, \frac{1}{2}\}$, we have\\
$\psi(x) = \frac{1}{2}$, then $\psi^2(x) = \psi(\frac{1}{2}) = \frac{1}{4}$ and $\psi^3(x) = \psi(\frac{1}{4}) = 0$.
\end{enumerate}
The constant function $x \mapsto 0$ is continuous on $[0, 1]$.
Thus $\psi$ is $3$-continuous and also $\psi$ is a $3$-Chatterjea contraction with $\alpha = \frac{1}{3}$ and has 0 as a unique fixed point.
\end{example}
\begin{remark}
The Example \ref{E} demonstrates that the $k$-continuity hypothesis in Theorem \ref{D} does not force $\psi$ itself to be continuous, it only requires some iterate of $\psi$ to be continuous. 
\end{remark}
\section*{Conflict of Interest}
The authors declare that there is no conflict of interest regarding the publication of this paper.

\end{document}